\numberwithin{equation}{section} % to number equations within sections
\def\ZZ{\mathbb Z}
\newcommand{\leg}[2]{\left({#1\over #2}\right)}
\newcommand{\fraz}[1]{\left\{#1\right\}}
\newtheorem{thm}{Theorem}[section]
\newtheorem{lem}[thm]{Lemma}
\newtheorem{cor}[thm]{Corollary}
\begin{document}

\title{\bf Congruences of multiple sums involving \\
invariant sequences under binomial transform}
\author{{\sc Roberto Tauraso}\\
%\footnote{
Dipartimento di Matematica\\
Universit\`a di Roma ``Tor Vergata'', Italy\\
{\tt tauraso@mat.uniroma2.it}\\
{\tt http://www.mat.uniroma2.it/$\sim$tauraso}
%}
}

\date{}
\maketitle
%\thanks{}
\begin{abstract}
\noindent We will prove several congruences modulo a power of a prime such as
$$
\sum_{0<k_1<\cdots<k_{n}<p}\leg{p-k_{n}}{3}
{(-1)^{k_{n}}\over k_1\cdots k_{n}}\equiv
 \left\{
\begin{array}{lll}
-{2^{n+1}+2\over 6^{n+1}}\,p\,B_{p-n-1}\left({1\over 3}\right)
&\pmod{p^2} &\mbox{if $n$ is odd}\\\\
-{2^{n+1}+4\over n6^n}\,B_{p-n}\left({1\over 3}\right)
&\pmod{p} &\mbox{if $n$ is even}\\
\end{array}
\right.
$$
where $n$ is a positive integer and $p$ is prime such that $p>\max(n+1,3)$.
\end{abstract}

\makeatletter{\renewcommand*{\@makefnmark}{}
\footnotetext{{2000 {\it Mathematics Subject Classification}: 11A07, 11B65, (Primary) 05A10, 05A19 (Secondary)}}\makeatother}

%%%%%%%%%%%%%%%%%%%%%%%%%%%%%%%%%%%%%%%%%%%%%%%%%%%%%%%%%%%%%%%%%%%%%%%%%%
\section{Introduction} 
%%%%%%%%%%%%%%%%%%%%%%%%%%%%%%%%%%%%%%%%%%%%%%%%%%%%%%%%%%%%%%%%%%%%%%%%%%
The classical binomial inversion formula states that the linear transformation 
of sequences
$$T(\{a_n\})=\left\{\sum_{k=0}^{n} {n\choose k}(-1)^{k} a_{k}\right\}$$
is an involution, that is $T\circ T$ is the identity map. 
Thus, $T$ has only two eigenvalues: $1$ and $-1$.
We denote by ${\cal S}_+$ and ${\cal S}_{-}$ the eigenspaces corresponding respectively 
to the eigenvalue $1$ and to the eigenvalue $-1$.
These eigenspaces contain many well known sequences: 
\begin{eqnarray*}
&&\{2^{-n}\},\;\{L_{n}\},\;
\{(-1)^n B_{n}\},\;\{(n+1)C_{n}4^{-n}\}\in  {\cal S}_{+},\\
&&\{0,1,1,\cdots\},\;\{F_n\},\;
\{(-1)^n\leg{n}{3}\},\;\{2^n-(-1)^n\}\in \cal{S}_{-}
\end{eqnarray*}
where $\{F_n\}$, $\{L_n\}$, $\{B_n\}$, $\{C_n\}$ denote respectively
the Fibonacci, Lucas, Bernoulli and Catalan numbers.
For a more detailed analysis of the properties of $ {\cal S}_{+}$ and $ {\cal S}_{-}$
the reader is referred to \cite{Sunzh:01} and \cite{Wa:05}.

In this note we would like to present several congruences
of multiple sums which involves these invariant sequences. Our main result is the following.

\begin{thm} Let $\{a_n\}\in \cal{S}_{-}$.
Let $n$ be a positive odd integer and let $p$ be a prime such that $p>n+1$ then
$$\sum_{0<k_1<\cdots<k_{n}<p}
{a_{p-k_{n}}\over k_1\cdots k_{n}}\equiv
{p(n+1)\over 2}\, \sum_{0<k_1<\cdots<k_{n+1}<p}
{a_{p-k_{n+1}}\over k_1\cdots k_{n+1}}\pmod{p^3}.$$
\end{thm}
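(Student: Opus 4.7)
My approach is to start from the $\mathcal{S}_-$ property at $n=0$ and $n=p$. Setting $n=0$ in $T(a)_n = -a_n$ forces $a_0 = 0$, and setting $n=p$ yields $\sum_{k=0}^{p}\binom{p}{k}(-1)^k a_k = -a_p$. Using $\binom{p}{0}=\binom{p}{p}=1$, $(-1)^p = -1$, and $a_0 = 0$, followed by the substitution $k \mapsto p-k$, this collapses to the identity
\[
\sum_{k=1}^{p-1}\binom{p}{k}(-1)^{k-1}a_{p-k} \;=\; 0. \qquad (\star)
\]
I would then expand $\binom{p}{k}(-1)^{k-1} = (p/k)\prod_{j=1}^{k-1}(1-p/j) = \sum_{r\ge 0}(-1)^r p^{r+1}H_r(k)/k$, where $H_r(k):=\sum_{0<j_1<\cdots<j_r<k}(j_1\cdots j_r)^{-1}$. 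Substituting into $(\star)$ and interchanging sums produces the \emph{master identity}
\[
pU_1 - p^2 U_2 + p^3 U_3 - p^4 U_4 + \cdots \;=\; 0,
\]
where $U_r$ denotes the $r$-fold sum in the statement of the theorem.

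For the case $n=1$, the master identity immediately gives $U_1 \equiv pU_2 - p^2 U_3 \pmod{p^3}$. Since $\tfrac{p(n+1)}{2} = p$ when $n=1$, the theorem in this case is equivalent to $U_3 \equiv 0 \pmod p$. For arbitrary odd $n$, I would derive a higher master identity of the form
\[
U_n \;=\; \tfrac{(n+1)p}{2}\,U_{n+1} \;+\; (\text{corrections of order }p^2 U_{n+2},\, p^3 U_{n+3},\ldots),
\]
by multiplying $(\star)$ by $(k_1\cdots k_{n-1})^{-1}$ and summing over $0<k_1<\cdots<k_{n-1}<k$, then expanding the product $H_r(k)H_{n-1}(k)$ via the Newton-type identity
\[
H_r(k)H_{n-1}(k) \;=\; \tbinom{n-1+r}{r}H_{n-1+r}(k) \;+\;\text{lower-order symmetric-function terms},
\]
where the corrections involve power sums $P_s(k):=\sum_{j<k}j^{-s}$. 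The leading coefficient $\binom{n}{1} = n$ at $r=1$, combined with an averaging over the $n+1$ positions of the distinguished index, should produce the target factor $(n+1)/2$.

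Completing the argument requires a vanishing lemma: $U_m \equiv 0 \pmod p$ for odd $m\ge 1$. I would obtain this via the reversal $k_i \mapsto p - k_{m-i+1}$, which gives $U_m \equiv (-1)^m \sum_{0<k_1<\cdots<k_m<p}\frac{a_{k_1}}{k_1\cdots k_m} \pmod p$, combined with the classical congruence $\prod_{i=1}^{p-1}(1+t/i) \equiv 1-t^{p-1}\pmod p$ (which implies $H_r(p)\equiv 0\pmod p$ for $1\le r\le p-2$) and the $\mathcal{S}_-$ property. The main obstacle is controlling the error terms in the Newton decomposition so that they contribute only at order $p^3$; a generating-function perspective using $\sum_{k\ge 1}\frac{H_{n-1}(k)}{k}x^k = \frac{(-\log(1-x))^n}{n!}$ together with $(1-x)^p = e^{-pL}$ (where $L = -\log(1-x)$) may streamline this bookkeeping.
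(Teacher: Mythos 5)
Your identity $(\star)$ is correct, and it is precisely the $m=1$ case of the family of identities the paper actually uses; expanded in powers of $p$ it gives the single relation $\sum_{r\ge 1}(-1)^{r-1}p^{r}U_{r}=0$. The genuine gap is in how you propose to obtain the ``higher master identities'' for odd $n>1$ (and indeed already for $n=1$, where you still owe a proof that $U_3\equiv 0\pmod p$). Identity $(\star)$ is one scalar equation $\sum_{k}c_k=0$ with $c_k=\binom{p}{k}(-1)^{k-1}a_{p-k}$; multiplying its $k$-th term by the weight $H_{n-1}(k)$ destroys the identity, since $\sum_k H_{n-1}(k)c_k$ has no reason to vanish --- in fact $\binom{p}{k}(-1)^{k-1}\equiv p/k\pmod{p^2}$ shows this weighted sum is $\equiv pU_n\pmod{p^2}$, i.e.\ essentially the quantity you are trying to evaluate. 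Consequently the quasi-shuffle expansion of $H_r(k)H_{n-1}(k)$ is applied to an expression of unknown value and yields no relation among the $U_j$. Even formally, the leading stuffle coefficient would give $U_n\equiv npU_{n+1}+\cdots$, not $\tfrac{(n+1)p}{2}U_{n+1}$ (these differ already for $n=3$), and the ``averaging over $n+1$ positions'' that is supposed to repair this is not defined. The missing ingredient is a one-parameter \emph{family} of identities: the paper proves, from the ${\cal S}_{-}$ property via generating functions, that $\sum_{k=1}^{p-1}\bigl[\binom{(m-1)p+k-1}{k}-(-1)^k\binom{mp}{k}\bigr]a_{p-k}=0$ for every $m\ge 0$, which after expansion gives $\sum_{j\ge 1}\bigl((m-1)^j-(-m)^j\bigr)p^jU_j=0$ for all $m$. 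Suitable integer combinations over $m$ (with coefficients the central factorial numbers $t(i,2j)$, satisfying $t(i,2j)=0$ for $i<2j$ and $t(j,2j)=1$) isolate $U_{2i-1}-ipU_{2i}$ modulo $p^3$; the factor $(n+1)/2=i$ comes from this elimination, not from a shuffle coefficient.

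A secondary but real problem is your vanishing lemma. The reversal $k_i\mapsto p-k_{m-i+1}$ gives $U_m\equiv(-1)^m\sum_{0<k_1<\cdots<k_m<p}a_{k_1}/(k_1\cdots k_m)\pmod p$, but to conclude $U_m\equiv 0$ for odd $m$ you must still identify that reversed sum with $U_m$ modulo $p$; that identification is exactly Corollary 1.2(i) of the paper, which is \emph{deduced from} Theorem 1.1, so as sketched your argument is circular. In the paper both facts --- $U_m\equiv 0\pmod p$ for odd $m$ and the refined congruence modulo $p^3$ --- drop out simultaneously from the eliminated relation $U_{2i-1}\equiv ipU_{2i}+p^2t(i,i+1)U_{2i+1}\pmod{p^3}$. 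If you want to keep your outline, the concrete fix is to replace the single identity $(\star)$ by the full $m$-family and carry out the elimination; without that, the plan does not go through.
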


Note that since
$$\sum_{k=0}^{n} {n\choose k}(-1)^{k} ka_{k-1}=-n
\sum_{k=0}^{n-1} {n-1\choose k}(-1)^{k} a_{k}$$
then $\{a_{n}\}\in \cal{S}_{+}$ if and only if $\{na_{n-1}\}\in \cal{S}_{-}$.
Hence by the previous theorem we easily find that

\begin{cor} 
Let $n$ be a positive odd integer and let $p$ be a prime such that $p>n+1$. 

i) If $\{a_n\}\in \cal{S}_{-}$.
$$\sum_{0<k_1<\cdots<k_{n}<p}
{a_{p-k_n}\over k_1\cdots k_{n}}\equiv 
\sum_{0<k_1<\cdots<k_{n}<p}
{a_{k_1}\over k_1\cdots k_{n}}\equiv 0 \pmod{p}.$$

ii) If $\{a_n\}\in \cal{S}_{+}$.
$$\sum_{0<k_1<\cdots<k_{n}<p}
{a_{p-k_n-1}\over k_1\cdots k_{n-1}}\equiv 
\sum_{0<k_1<\cdots<k_{n}<p}
{a_{k_1-1}\over k_2\cdots k_{n}}\equiv 0 \pmod{p}.$$
\end{cor}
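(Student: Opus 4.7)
The plan is to derive all four congruences from Theorem~1.1 by means of two elementary re-indexings, together with the $\cal{S}_+/\cal{S}_-$ correspondence noted just before the corollary. I would begin with part~(i), first sum: since the right-hand side of Theorem~1.1 carries an explicit factor of $p$, reducing the congruence there modulo $p$ immediately yields
$$\sum_{0<k_1<\cdots<k_n<p}\frac{a_{p-k_n}}{k_1\cdots k_n}\equiv 0\pmod p$$
for every $\{a_n\}\in\cal{S}_-$, which is the first assertion of (i).

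To pass to the sum with $a_{k_1}$ in (i), I would apply the reflection $k_i\mapsto p-k_{n+1-i}$, which is a bijection on strictly increasing tuples in the interval $(0,p)$. Under this substitution $a_{p-k_n}$ is sent to $a_{k_1}$, while each denominator factor $k_j$ turns into $p-k_j\equiv -k_j\pmod p$. The total sign picked up is $(-1)^n=-1$ because $n$ is odd, so the two sums in~(i) are negatives of one another modulo $p$, and the first vanishing forces the second.

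Part~(ii) I would deduce from part~(i) by invoking the identity $\{a_n\}\in\cal{S}_+\iff\{na_{n-1}\}\in\cal{S}_-$ displayed in the text. Setting $b_n:=na_{n-1}$ places $\{b_n\}$ in $\cal{S}_-$, and applying part~(i) to $\{b_n\}$ together with the relations
$$b_{p-k_n}=(p-k_n)\,a_{p-k_n-1}\equiv -k_n\,a_{p-k_n-1}\pmod p,\qquad b_{k_1}=k_1\,a_{k_1-1}$$
cancels the innermost denominator $k_n$ in the first case and the outermost denominator $k_1$ in the second, producing precisely the two congruences claimed in~(ii). No serious obstacle is expected: the only delicate point is the sign in the reflection (which is exactly where the hypothesis that $n$ is odd is used), while the nonnegativity of the shifted indices $p-k_n-1$ and $k_1-1$ is automatic from $0<k_1\le k_n<p$.
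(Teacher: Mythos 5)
Your proposal is correct and follows exactly the route the paper intends (the paper leaves the corollary's proof implicit, asserting only that it follows ``by the previous theorem''): reduce Theorem~1.1 modulo $p$ to kill the right-hand side, use the reflection $k_i\mapsto p-k_{n+1-i}$ with the sign $(-1)^n=-1$ to relate the two sums in (i), and apply (i) to $b_n=na_{n-1}$ to obtain (ii). All the details you supply, including the index bookkeeping and the cancellation of $k_n$ and $k_1$ in part (ii), check out.
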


In the final section we consider the special class of
sequences in ${\cal S}_{-}$ which are second-order linear recurrences
and we will prove the congruence mentioned in the abstract which generalizes a 
result established in \cite{ZhSuzw:09}.

%%%%%%%%%%%%%%%%%%%%%%%%%%%%%%%%%%%%%%%%%%%%%%%%%%%%%%%%%%%%%%%%%%%%%%%%%%
\section{The main result}
%%%%%%%%%%%%%%%%%%%%%%%%%%%%%%%%%%%%%%%%%%%%%%%%%%%%%%%%%%%%%%%%%%%%%%%%%%
The starting point of our work is the following identity.

\begin{lem} 
Let $\{a_n\}\in {\cal S}_{-}$ then for $m,n\geq 0$  
$$\sum_{k=0}^{n}\left[{(m-1)n+k-1 \choose k} 
+(-1)^{n-k}{m n \choose k}\right]a_{n-k}=0.$$
\end{lem}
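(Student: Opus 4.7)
My plan is to recast both sums as coefficient extractions from a single ordinary generating function, translate the $\mathcal{S}_{-}$ hypothesis into a functional equation for that generating function, and collapse the identity via a Lagrange-type change of variable.

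Let $A(y)=\sum_{j\ge 0}a_j y^j$. The identities $\binom{N+k-1}{k}=[y^k](1-y)^{-N}$ and $(-1)^k\binom{N}{k}=[y^k](1-y)^{N}$ let me rewrite the left-hand side of the lemma as
$$[y^n]\frac{A(y)}{(1-y)^{(m-1)n}}\,+\,(-1)^n\,[y^n]\,A(y)(1-y)^{mn},$$
so the claim reduces to the single equality $[y^n]A(y)(1-y)^{-(m-1)n}=(-1)^{n+1}[y^n]A(y)(1-y)^{mn}$.

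Next I would unpack the $\mathcal{S}_{-}$ condition at the level of $A$: swapping the order of summation in $\sum_n y^n\sum_{k=0}^n\binom{n}{k}(-1)^k a_k$ yields $\frac{1}{1-y}A\!\left(\frac{-y}{1-y}\right)$, so the condition $T(a)=-a$ becomes the functional equation $A(\phi(y))=-(1-y)A(y)$, where $\phi(y):=-y/(1-y)$ is a formal involution satisfying $\phi(0)=0$, $\phi'(y)=-(1-y)^{-2}$, and $1-\phi(y)=1/(1-y)$.

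The key step is the change of variable $y\mapsto\phi(z)$ in the formal residue expression $[y^n]h(y)=[y^{-1}]h(y)y^{-n-1}$. A short computation with the data just recorded yields $[y^n]h(y)=(-1)^n\,[z^n]h(\phi(z))(1-z)^{n-1}$ for any formal power series $h$. Applying this with $h(y)=A(y)(1-y)^{-(m-1)n}$ and using the functional equation to evaluate $h(\phi(z))=-A(z)(1-z)^{(m-1)n+1}$ makes the exponents telescope into $(m-1)n+1+n-1=mn$, producing the desired equality at once. The only real obstacle is formulating this formal Lagrange-type change-of-variable cleanly and tracking the signs $(-1)^n$ and $(-1)^{n+1}$; the precise pairing of weights $-(m-1)n$ and $mn$ in the statement is exactly what makes the telescoping work, which conceptually explains why these two particular binomial weights are linked.
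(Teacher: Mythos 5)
Your proposal is correct and follows essentially the same route as the paper: both translate the $\mathcal{S}_{-}$ hypothesis into the functional equation $A\left(y/(y-1)\right)=-(1-y)A(y)$ for the generating function and then compare the two binomial-weighted sums as formal residues linked by the involution $y\mapsto y/(y-1)$. The only difference is organizational --- the paper builds the substitution directly into its residue representation of $(-1)^{n-k}\binom{mn}{k}$ so that the two terms merge into a single residue that vanishes by the functional equation, whereas you extract the two coefficients first and then apply an explicit Lagrange-type change of variable; the underlying computation is identical.
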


\begin{proof} This identity can be obtained from
$$\sum_{k=0}^{n}{n\choose k}
\left[f_k+(-1)^{n-k} \sum_{i=0}^k {k\choose i}f_i\right]a_{n-k}=0$$
which appears in \cite{Sunzh:01} (see also \cite{Wa:05}), 
by taking $f(k)={(m-1)n+k-1 \choose k}/{n\choose k}$.

\noindent However, for completeness sake, we give here a direct proof.
Let $A(z)$ be the generating function of $\{a_n\}$, then
$$A(z)=-{1\over 1-z} A\left({z \over z-1}\right).$$
because $\{a_{n}\}\in \cal{S}_{-}$.
Since 
$${(m-1)n+k-1 \choose k}=[z^{k}]{1\over (1-z)^{(m-1)n}}=
[z^{-1}]{1\over z^{n+1}(1-z)^{(m-1)n}}\cdot z^{n-k}
$$
and
$$
(-1)^{n-k}{m n \choose k}=[z^{k}]{(-1)^{n-k}\over (1-z)^{mn-k+1}}=
[z^{-1}]{1\over z^{n+1}(1-z)^{(m-1)n+1}}\cdot \left({z \over z-1}\right)^{n-k},
$$
then the right-hand side of the identity becomes
$$[z^{-1}]{1\over z^{n+1}(1-z)^{(m-1)n}}\cdot
\left[A(z)+{1\over 1-z} A\left({z \over z-1}\right)\right]=0.$$
\end{proof}

Before proving our main result we 
define the {\it multiple harmonic sum} of order $n>0$ as
$$H_r^{(n)}=\sum_{1\leq k_1<k_2<\dots<k_n\leq r}
{1\over k_1 k_2 \cdots k_n} \quad\mbox{for $r\geq1$}.$$

\begin{proof}[Proof of Theorem 1.1] 
Since $a_0=0$ and $p$ is odd, by the previous lemma we have that
$$\sum_{k=1}^{p-1}\left[{(m-1)p+k-1 \choose k} 
-(-1)^{k}{mp \choose k}\right]a_{p-k}=0.$$
By expanding the binomial coefficients for $1<k<p$ we get
$${(m-1)p+k-1\choose k}=
{(m-1)p\over k}\prod_{j=1}^{k-1}\left(1+{(m-1)p\over j}\right)=
 {1\over k}\sum_{j\geq 1}((m-1)p)^jH_{k-1}^{(j-1)}$$
and
$${mp\choose k}={mp\over k}(-1)^{k-1}\prod_{j=1}^{k-1}\left(1-{mp\over j}\right)
={(-1)^k\over k}\sum_{j\geq 1}(-mp)^jH_{k-1}^{(j-1)}.$$
Hence, 
$$\sum_{j\geq 1}((m-1)^j-(-m)^j)\,p^jS_j=0$$
where $S_j=\sum_{k=1}^{p-1}H_{k-1}^{(j-1)} {a_{p-k}\over k}$.
The infinite matrix of the coefficients of $\{p^jS_j\}$ is
$$
\begin{bmatrix}
1&-1&1&-1&1&-1&1&-1&\cdots\\
3&-3&9&-15&33&-63&129&-255&\cdots\\
5&-5&35&-65&275&-665&2315&-6305&\cdots\\
7&-7&91&-175&1267&-3367&18571&-58975&\cdots\\
9&-9&189&-369&4149&-11529&94509&-325089&\cdots\\
\vdots&\vdots&\vdots&\vdots&\vdots&\vdots&\vdots&\vdots&\ddots\\
\end{bmatrix}
$$
and by performing the Gaussian elimination we obtain
$$
\begin{bmatrix}
1&-1&1&-1&1&-1&1&-1&\cdots\\
0& 0&1&-2&5&-10&21&-42&\cdots\\
0&0&0&0&1&-3&14&-42&\cdots\\
0&0&0&0&0&0&1&-4&\cdots\\
0&0&0&0&0&0&0&0&\cdots\\
\vdots&\vdots&\vdots&\vdots&\vdots&\vdots&\vdots&\vdots&\ddots\\
\end{bmatrix}.
$$
In general let 
$$t(i,j)={2\over(2i)!}\sum_{m=1}^{i}(-1)^{i-m}{2i\choose i+m} m^j$$
then
\begin{eqnarray*}
0&=&{1\over(2i)!}\sum_{m=1}^{i}(-1)^{i-m}(i+m){2i\choose i+m}\sum_{j\geq 1}((m-1)^j-(-m)^j)\,p^jS_j\\
&=&\sum_{j\geq 1} {p^jS_j\over(2i)!}\left[
-\sum_{m=1}^{i}(-1)^{i-(m-1)}(i-(m-1)){2i\choose i+m-1}(m-1)^j\right.\\
&&\;\;\;\;\;\;\;\;\;\;\;\;\;\;\;\;\;\;\;\;
\left.-(-1)^j\sum_{m=1}^{i}(-1)^{i-m}(i+m){2i\choose i+m}m^j
\right]\\
&=&\sum_{j\geq 1} {p^jS_j\over 2}\left[-it(i,j)+t(i,j+1)-i(-1)^j t(i,j)-(-1)^j t(i,j+1)\right]\\
&=&\sum_{j\geq 1} p^jS_j\left[-{1+(-1)^j\over 2}it(i,j)+{1-(-1)^j\over 2}t(i,j+1)\right]\\
&=&\sum_{j\geq 1} p^{2j-1}t(i,2j)\left(S_{2j-1}-ipS_{2j}\right).
\end{eqnarray*}
Note that $t(i,2j)$ are the {\sl triangle central factorial numbers} 
(see \cite{Ri:68} p. 217)
%A036969, 
therefore $t(i,2j)=0$ if $i<2j$ and $t(j,2j)=1$. Thus 
$$S_{2i-1}\equiv ipS_{2j}+p^2t(i,i+1)S_{2i+1} \pmod{p^3}.$$
It follows that $S_{n}\equiv 0 \pmod{p}$ when $n$ is odd and finally we get
$$S_{2i-1}\equiv ip\,S_{2i}\pmod{p^3}.$$
\end{proof}

%%%%%%%%%%%%%%%%%%%%%%%%%%%%%%%%%%%%%%%%%%%%%%%%%%%%%%%%%%%%%%%%%%%%%%%%%%
\section{A special class of invariant sequences}
%%%%%%%%%%%%%%%%%%%%%%%%%%%%%%%%%%%%%%%%%%%%%%%%%%%%%%%%%%%%%%%%%%%%%%%%%%
Let us prove first a preliminary lemma.

\begin{lem} Let $n$ be a positive integer and let $p$ be any prime such that $p>n+1$.
Then the two polynomials of $\ZZ_p[x]$
$$G_n(x)=\sum_{k=1}^{p-1} H_{k-1}^{(n-1)}{x^k\over k}, 
\quad\mbox{and}\quad g_n(x)=\sum_{k=1}^{p-1} {x^k\over k^n}.$$
satisfy the congruence
$$G_n(x)\equiv (-1)^{n-1} g_n(1-x) \pmod{p}.$$
\end{lem}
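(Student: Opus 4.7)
My plan is to work entirely inside $\mathbb{F}_p[x]$ and match coefficients of $x^j$ on both sides. Expanding $g_n(1-x)=\sum_{k=1}^{p-1}(1-x)^k/k^n$ by the binomial theorem, the coefficient of $x^j$ equals $(-1)^j\sum_{k=\max(j,1)}^{p-1}\binom{k}{j}/k^n$. The constant term ($j=0$) is $g_n(1)=\sum_{k=1}^{p-1}1/k^n$, which vanishes modulo $p$ by the classical Glaisher power-sum congruence (valid because $1\le n\le p-2$); this matches $G_n(0)=0$. For $1\le j\le p-1$, the identity $\binom{k}{j}/k=\binom{k-1}{j-1}/j$ reduces the theorem to proving
$$\sum_{k=j}^{p-1}\frac{\binom{k-1}{j-1}}{k^m}\equiv(-1)^{j+m}H_{j-1}^{(m)}\pmod{p},\qquad m=n-1.$$

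The main move is the substitution $k=p-l$. Modulo $p$ one has $\binom{p-l-1}{j-1}\equiv(-1)^{j-1}\binom{l+j-1}{j-1}$ and $1/(p-l)^m\equiv(-1)^m/l^m$, so the left-hand side becomes $(-1)^{j+m-1}\sum_{l=1}^{p-j}\binom{l+j-1}{j-1}/l^m$. A short check shows that for $l\in[p-j+1,p-1]$ the numerator $(l+1)(l+2)\cdots(l+j-1)$ of $\binom{l+j-1}{j-1}$ contains $p$ as one of its factors, so these extra binomials vanish mod $p$ and the summation range may be enlarged up to $l=p-1$ without changing the value.

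The key expansion is now $\binom{l+j-1}{j-1}=\prod_{r=1}^{j-1}(1+l/r)=\sum_{s=0}^{j-1}H_{j-1}^{(s)}\,l^s$, exploiting that $H_{j-1}^{(s)}$ is precisely the $s$th elementary symmetric polynomial of $1/1,1/2,\dots,1/(j-1)$. Dividing by $l^m$ and swapping the order of summation gives
$$\sum_{l=1}^{p-1}\frac{\binom{l+j-1}{j-1}}{l^m}=\sum_{s=0}^{j-1}H_{j-1}^{(s)}\sum_{l=1}^{p-1}l^{s-m}.$$

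Since $p>n+1$ forces every exponent to satisfy $|s-m|\le p-2$, the standard dichotomy $\sum_{l=1}^{p-1}l^t\equiv-1\pmod{p}$ when $(p-1)\mid t$, and $\equiv 0$ otherwise, retains only the term $s=m$, contributing $-H_{j-1}^{(m)}$. Multiplying back by $(-1)^{j+m-1}$ recovers the desired $(-1)^{j+m}H_{j-1}^{(m)}$; when $m\ge j$ no $s$ qualifies, consistent with $H_{j-1}^{(m)}=0$. The only delicate points I expect are the sign bookkeeping through $k\mapsto p-l$ and the verification that the boundary binomials really drop modulo $p$, both of which are routine once set up carefully.
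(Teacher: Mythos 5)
Your proof is correct, but it takes a genuinely different route from the paper. The paper argues by induction on $n$: it verifies the case $n=1$ via $\binom{p}{k}\equiv(-1)^{k-1}p/k\pmod{p^2}$, then shows that $(1-x)\frac{d}{dx}\bigl(G_n(x)+(-1)^n g_n(1-x)\bigr)\equiv G_{n-1}(x)+(-1)^{n-1}g_{n-1}(1-x)\pmod p$ (using $H_{p-1}^{(n-1)}\equiv 0$), so the difference is a constant mod $p$, which is pinned down by evaluating at $x=1$ and invoking $H_{p-1}^{(n)}\equiv 0$. You instead compare coefficients of $x^j$ directly and reduce everything to the closed-form congruence $\sum_{k=j}^{p-1}\binom{k-1}{j-1}/k^{m}\equiv(-1)^{j+m}H_{j-1}^{(m)}\pmod p$, which you establish by the substitution $k\mapsto p-l$, the identity $\binom{l+j-1}{j-1}=\sum_{s=0}^{j-1}H_{j-1}^{(s)}l^{s}$ (harmonic sums as elementary symmetric functions of $1,\tfrac12,\dots,\tfrac1{j-1}$), and the dichotomy for the power sums $\sum_{l=1}^{p-1}l^{t}$. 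All the delicate points check out: the boundary binomials for $l\in[p-j+1,p-1]$ do contain the factor $l+(p-l)=p$, the exponent bound $|s-m|\le p-2$ follows from $j\le p-1$ and $n\le p-2$, and the degenerate cases ($j=1$, $m\ge j$, the constant term via Glaisher) are handled consistently. The paper's induction is shorter and outsources the arithmetic input to the single fact $H_{p-1}^{(r)}\equiv 0\pmod p$; your argument is non-inductive, needs only Fermat's power-sum dichotomy, and yields as a by-product the coefficient-wise congruence for $\sum_k\binom{k-1}{j-1}/k^m$, which is a stronger, independently useful statement.
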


\begin{proof} We prove the congruence by induction on $n$. 

\noindent For $n=1$, since 
${p\choose k}=(-1)^{k-1}{p\over k}\pmod{p^2}$ for $0<k<p$ then
$$G_1(x)\equiv {1\over p}\sum_{k=1}^{p-1}(-1)^{k-1}{p\choose k}{x^k}
=-{1\over p}\sum_{k=1}^{p-1}{p\choose k}{(-x)^k}=
{(1-x)^p-1-x^p\over p}\pmod{p}.$$
Hence $G_1(x)\equiv G_1(1-x)=g_1(1-x)\pmod{p}$.

\noindent Assume that $n>1$. 
The formal derivative yields
\begin{eqnarray*}
{d\over dx} G_n(x)&=&\sum_{k=1}^{p-1} H_{k-1}^{(n-1)}x^{k-1}
=\sum_{k=1}^{p-1}\sum_{j=1}^{k-1} H_{j-1}^{(n-2)}{x^{k-1}\over j}\\
&=&\sum_{j=1}^{p-1} H_{j-1}^{(n-2)}{1\over j}\sum_{k=j+1}^{p-1}x^{k-1}
=\sum_{j=1}^{p-1} H_{j-1}^{(n-2)}{1\over j}
\cdot {x^{p-1}-x^{j}\over x-1}\\
&=&{x^{p-1}\over x-1}\, H_{p-1}^{(n-1)}-{G_{n-1}(x)\over x-1} 
\equiv {G_{n-1}(x)\over 1-x} \pmod{p}
\end{eqnarray*}
where in the last step we used the fact that 
$H_{p-1}^{(n-1)}\equiv 0 \pmod{p}$ (see for example \cite{Zh:08}). Moreover
$${d\over dx} g_n(1-x)=-\sum_{k=1}^{p-1} {(1-x)^{k-1}\over k^{n-1}}
=-{g_{n-1}(1-x)\over 1-x}.$$
Hence, by the induction hypothesis
$$(1-x){d\over dx} \left(G_n(x)+(-1)^n g_n(1-x)\right)\equiv
G_{n-1}(x)+(-1)^{n-1} g_{n-1}(1-x)\equiv 0 \pmod{p}.$$
Thus $G_n(x)+(-1)^n g_n(1-x)\equiv c_1$ (mod $p$) for some constant $c_1$ since this polynomial
has degree $<p$. By letting $x=1$ we find that 
$$G_n(x)+(-1)^n g_n(1-x)\equiv c_1\equiv G_n(1)+(-1)^n g_n(0)
=H_{p-1}^{(n)}\equiv 0 \pmod{p}.$$
\end{proof}

Let us consider a sequence $\{a_n\}\in {\cal S}_{-}$ which is a second-order recurrence. 
Since its generating function should satisfy the identity
$$A(z)=-{1\over 1-z}\, A\left({z \over z-1}\right)$$
it is easy to verify that 
$$A(z)={a_1 z\over 1-z-cz^2}$$
for some real number $c$.  The following result holds for any invariant sequence of this kind.

\begin{thm} Let $n$ be a positive integer and let $p$ be a prime such that $p>n+1$.
If $\sum_{k\geq 0}a_k\,z^k={a_1 z\over 1-z-cz^2}$ for some integer $c$ then
$$
\sum_{0<k_1<\cdots<k_{n}<p}
{a_{p-k_{n}}\over k_1\cdots k_{n}}\equiv
 \left\{
\begin{array}{lll}\displaystyle
-p(n+1)\sum_{k=1}^{(p-1)/2}{c^k a_{p-2k}\over k^{n+1}} &\pmod{p^2} 
&\mbox{if $n$ is odd}\\\\\displaystyle
-2\sum_{k=1}^{(p-1)/2}{c^k a_{p-2k}\over k^n} &\pmod{p} 
&\mbox{if $n$ is even}
\end{array}
\right. .
$$
\end{thm}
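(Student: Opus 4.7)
The plan is to leverage the Binet-type formula available for any second-order linear recurrence. Factor $1-z-cz^2 = (1-\alpha z)(1-\beta z)$ in a suitable ring extension, so that $\alpha+\beta=1$, $\alpha\beta=-c$, and $a_n = a_1(\alpha^n-\beta^n)/(\alpha-\beta)$. Taking $k_n=k$ in the outermost index, the left-hand side equals $\sum_{k=1}^{p-1} H_{k-1}^{(n-1)}\, a_{p-k}/k$; substituting the Binet formula recasts this as
$$\text{LHS} \;=\; \frac{a_1}{\alpha-\beta}\bigl(\alpha^p\, G_n(1/\alpha) - \beta^p\, G_n(1/\beta)\bigr),$$
where $G_n$ is the polynomial from Lemma 3.1. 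This places the problem squarely within the scope of that lemma.

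Next invoke Lemma 3.1, noting that $1-1/\alpha = -\beta/\alpha$ (from $\alpha+\beta=1$); this gives $\alpha^p G_n(1/\alpha) \equiv (-1)^{n-1}\sum_{k=1}^{p-1}(-1)^k\alpha^{p-k}\beta^k/k^n \pmod p$, and symmetrically for $\beta$. Subtracting and cancelling $\alpha-\beta$, then splitting the resulting sum at $k=(p-1)/2$, one handles the two halves separately: for $k\leq(p-1)/2$ factor out $(\alpha\beta)^k=(-c)^k$ and rewrite $\alpha^{p-2k}-\beta^{p-2k}$ as $(\alpha-\beta)a_{p-2k}/a_1$; for $k\geq(p+1)/2$ substitute $k\mapsto p-k$ and use $(-1)^{p-k}=-(-1)^k$ together with $1/(p-k)^n\equiv(-1)^n/k^n \pmod p$. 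The two halves collapse into a single sum weighted by $1+(-1)^n$, producing
$$\text{LHS}\;\equiv\; (-1)^{n-1}\bigl(1+(-1)^n\bigr)\sum_{k=1}^{(p-1)/2}\frac{c^k a_{p-2k}}{k^n}\pmod p.$$
For $n$ even the prefactor is $-2$, which is exactly the claim.

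For $n$ odd the above gives only $0 \pmod p$, so one must lift. Apply Theorem 1.1 to replace the original sum by $\frac{p(n+1)}{2}$ times the $(n+1)$-fold analogue modulo $p^3$; since $n+1$ is even, substituting the just-established even-case formula (valid modulo $p$, which suffices after multiplication by $p$) yields the required $-p(n+1)\sum_k c^k a_{p-2k}/k^{n+1} \pmod{p^2}$. The main obstacle is the careful sign bookkeeping in the $k\mapsto p-k$ reindexing, together with justifying the algebraic manipulations involving $\alpha,\beta$ in the mild edge cases where $\alpha-\beta$ or $\alpha\beta$ fails to be invertible modulo $p$ (i.e.\ when $p\mid c(1+4c)$); one can either argue generically in the flat ring $\mathbb{Z}[X]/(X^2-X-c)$ and reduce only at the end, or dispose of the two degenerate values of $c$ by direct calculation using $a_n=a_1$ when $c=0$ and the explicit double-root formula when $1+4c\equiv 0\pmod p$.
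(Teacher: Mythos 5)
Your proposal is correct and follows essentially the same route as the paper: Binet factorization of $1-z-cz^2$, rewriting the sum via $G_n$ evaluated at the reciprocal roots, applying Lemma 3.1 together with $1-w_{\pm}^{-1}=cw_{\pm}^{-2}$, folding the sum by the substitution $k\mapsto p-k$, and reducing the odd case to the even case through Theorem 1.1. Your extra care about the degenerate cases $p\mid c(1+4c)$ is a reasonable refinement of a point the paper passes over silently by working in $\ZZ_p[\sqrt{\Delta}]$.
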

\begin{proof} By linearity we can assume that $a_1=1$ (the case $a_1=0$ is trivial).

\noindent Since $\{a_k\}\in {\cal S}_{-}$ then, by Theorem 1.1, if $n$ is odd we have that
$$\sum_{0<k_1<\cdots<k_{n}<p}
{a_{p-k_{n}}\over k_1\cdots k_{n}}\equiv
{p(n+1)\over 2} \sum_{0<k_1<\cdots<k_{n+1}<p}
{a_{p-k_{n+1}}\over k_1\cdots k_{n+1}}\pmod{p^3},$$
thus it suffices to consider the case when $n$ is even.

\noindent Let $\Delta=1+4c$ and consider the ring $\ZZ_p[\sqrt{\Delta}]$.
Then for $k\geq 0$
$$a_k={w_{+}^k+w_{-}^k\over \sqrt{\Delta}}\quad\mbox{with}\quad
w_{\pm}={1\pm\sqrt{\Delta}\over 2}.$$
This formula allows to extend the sequence $\{a_k\}$ also for 
negative indeces: $a_{-k}=-a_{k}(-c)^{-k}$ for $k>0$ because $w_{+}+w_{-}=1$ and $w_{+}w_{-}=-c$.
Therefore, by the previous lemma, we have that
\begin{eqnarray*}
\sum_{0<k_1<\cdots<k_{n}<p}
{a_{p-k_{n}}\over k_1\cdots k_{n}}
&=&\sum_{k=1}^{p-1} H_{k-1}^{(n-1)}{a_{p-k}\over k}\\
&=&{w_{+}^p\over \sqrt{\Delta}}\sum_{k=1}^{p-1} H_{k-1}^{(n-1)}{w_{+}^{-k}\over k}
-{w_{-}^p\over \sqrt{\Delta}}\sum_{k=1}^{p-1} H_{k-1}^{(n-1)}{w_{-}^{-k}\over k}\\
&\equiv&
-{w_{+}^p\over \sqrt{\Delta}}\sum_{k=1}^{p-1} {(1-w_{+}^{-1})^k\over k^n}
+{w_{-}^p\over \sqrt{\Delta}}\sum_{k=1}^{p-1}{(1-w_{-}^{-1})^k\over k^n}
\pmod{p}.
\end{eqnarray*}
Since $1-w_{\pm}^{-1}=cw_{\pm}^{-2}$, it follows that
\begin{eqnarray*}
\sum_{0<k_1<\cdots<k_{n}<p}
{a_{p-k_{n}}\over k_1\cdots k_{n}}
&\equiv&
-{w_{+}^p\over \sqrt{\Delta}}\sum_{k=1}^{p-1}
{c^k w_{+}^{-2k}\over k^n}
+{w_{-}^p\over \sqrt{\Delta}}\sum_{k=1}^{p-1}{c^k w_{-}^{-2k}\over k^n}\\
&\equiv&
-\sum_{k=1}^{p-1}{c^k a_{p-2k}\over k^n}\\
&\equiv&
-\sum_{k=1}^{(p-1)/2}{c^k a_{p-2k}\over k^n}
-\sum_{k=1}^{(p-1)/2}{c^{p-k} a_{-(p-2k)}\over (p-k)^n}\\
&\equiv&
-2\sum_{k=1}^{(p-1)/2}{c^k a_{p-2k}\over k^n}\pmod{p}.
\end{eqnarray*}
\end{proof}

Finally, we prove are ready to prove the congruence mentioned in the abstract.

\begin{thm} Let $n$ be a positive integer and let $p$ be a prime such that $p>\max(n+1,3)$. Then 
$$
\sum_{0<k_1<\cdots<k_{n}<p}\leg{p-k_{n}}{3}
{(-1)^{k_{n}}\over k_1\cdots k_{n}}\equiv
 \left\{
\begin{array}{lll}
-{2^{n+1}+2\over 6^{n+1}}\,p\,B_{p-n-1}\left({1\over 3}\right)
&\pmod{p^2} &\mbox{if $n$ is odd}\\\\
-{2^{n+1}+4\over n6^n}\,B_{p-n}\left({1\over 3}\right)
&\pmod{p} &\mbox{if $n$ is even}\\
\end{array}
\right. .
$$
\end{thm}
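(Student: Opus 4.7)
The plan is to recognize the left-hand side as a particular instance of Theorem 3.2 applied to a specific second-order recurrence in ${\cal S}_-$, and then evaluate the resulting one-dimensional sum in terms of values of $B_{p-m}(1/3)$. Set $a_k:=(-1)^{k+1}\leg{k}{3}$. Because $\{(-1)^k\leg{k}{3}\}\in{\cal S}_-$ (introduction) and ${\cal S}_-$ is a linear subspace, $\{a_k\}\in{\cal S}_-$ too. The sequence begins $0,1,1,0,-1,-1,0,\dots$ and is $6$-periodic satisfying $a_k=a_{k-1}-a_{k-2}$; hence its generating function is $z/(1-z+z^2)$, which places us in the setting of Theorem 3.2 with $a_1=1$ and $c=-1$. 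Since $p$ is odd, $(-1)^{p-k_n+1}=(-1)^{k_n}$ and so $a_{p-k_n}=(-1)^{k_n}\leg{p-k_n}{3}$; consequently the left-hand side of Theorem 3.3 equals $\sum_{0<k_1<\cdots<k_n<p}a_{p-k_n}/(k_1\cdots k_n)$.

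Invoke Theorem 3.2 with $c=-1$. Writing $m=n$ (even $n$) or $m=n+1$ (odd $n$) so that $m$ is always even, and using $a_{p-2k}=\leg{p-2k}{3}$ (again since $p$ is odd), matching the right-hand sides in Theorem 3.3 reduces the claim to the single-sum congruence
$$
S_m\;:=\;\sum_{k=1}^{(p-1)/2}\frac{(-1)^k\leg{p-2k}{3}}{k^m}\;\equiv\;\frac{2^m+2}{m\cdot 6^m}\,B_{p-m}\!\left(\tfrac13\right)\pmod{p},
$$
valid for every even $m$ with $p>m+1$. The even-$n$ case then follows directly from the factor $-2$ produced by Theorem 3.2; the odd-$n$ case follows from the extra factor $-p(n+1)$ produced by Theorem 3.2, which supplies the $p$ and adjusts the denominator from $6^m=6^{n+1}$ as required for the stated $\pmod{p^2}$ congruence.

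For the key single-sum identity, observe that $\chi_p(k):=(-1)^k\leg{p-2k}{3}$ is $6$-periodic in $k$, and the identity $\leg{-a}{3}=-\leg{a}{3}$ combined with the oddness of $p$ yields the symmetry $\chi_p(p-k)=\chi_p(k)$. Since $m$ is even, $(p-k)^{-m}\equiv k^{-m}\pmod p$, so $S_m\equiv\tfrac12\sum_{k=1}^{p-1}\chi_p(k)/k^m\pmod{p}$. Splitting the complete sum by residue classes of $k$ modulo $6$ reduces the problem to evaluating, for each $r\in\{1,\ldots,5\}$, the partial harmonic sum $\sum_{0<k<p,\,k\equiv r\pmod 6}1/k^m$ modulo $p$; each such sum is supplied by the standard Glaisher-type congruence expressing it in terms of $B_{p-m}(r/6)$ (cf.\ \cite{Sunzh:01} and \cite{Zh:08}, already invoked in Lemma 3.1). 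Applying the Bernoulli distribution relation together with the parity $B_{p-m}(1-x)=-B_{p-m}(x)$ (valid because $p-m$ is odd) collapses the five residual Bernoulli values into a single multiple of $B_{p-m}(1/3)$, and the coefficient $(2^m+2)/(m\cdot 6^m)$ falls out of the resulting character sum. The principal obstacle is this final bookkeeping: the factor $2^m$ must be produced from the $r\in\{1,5\}$ contributions via the distribution relation, and one must also check that the stated coefficient is \emph{independent} of $p\bmod 6$, even though $\chi_p$ itself depends on $p\bmod 3$ (namely $\chi_p(k)=(-1)^k\leg{k+1}{3}$ if $p\equiv1\pmod 3$ and $\chi_p(k)=(-1)^k\leg{k-1}{3}$ if $p\equiv2\pmod 3$).
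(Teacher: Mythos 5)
Your proposal is correct and follows essentially the same route as the paper: reduce to Theorem 3.2 with $c=-1$, symmetrize the half-sum over $k\mapsto p-k$ to a full sum, split by residues modulo $6$, and collapse the Bernoulli values $B_{p-m}(r/6)$ via the reflection and multiplication formulas (your stated coefficient $(2^m+2)/(m6^m)$ and the independence of $p\bmod 6$ both check out). One small caveat in the deferred bookkeeping: the residue class $k\equiv 0\pmod 6$ also contributes (since $\leg{p}{3}\neq 0$), so the relevant classes are not only $r\in\{1,\dots,5\}$; omitting it would lose half of the $2^m$ term. Also, the Glaisher-type congruence is from Sun's \emph{Congruences involving Bernoulli polynomials}, not the invariant-sequences reference.
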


\begin{proof} Let $a_k=(-1)^{p-k}\leg{k}{3}$ then its generating function is
$$A(z)={z\over 1-z+z^2}$$
and by the previous theorem, for $n$ even the right-hand side yields
$$
\sum_{0<k_1<\cdots<k_{n}<p}\leg{p-k_{n}}{3}
{(-1)^{k_{n}}\over k_1\cdots k_{n}}
\equiv
-2\sum_{k=1}^{(p-1)/2}{(-1)^k\over k^n}\leg{p-2k}{3}
\equiv\sum_{k=1}^{p-1}{(-1)^{k-1}\over k^n}\leg{p+k}{3}\pmod{p}.
$$
By \cite{Sunzh:08} we have that 
$$\sum_{\scriptsize
\begin{array}{c} 
k=1\\ k\equiv r\; \mbox{\scriptsize(mod $6$)}
\end{array}}^{p-1}
\!\!\!\!{1\over k^n}\equiv {1\over n6^n}\left(
B_{p-n}\left(\fraz{{r\over 6}}\right)
-B_{p-n}\left(\fraz{{r-p\over 6}}\right)\right)\pmod{p}
$$
for $p>\max(n+1,3)$ and for $r=0,1,2,3,4,5$.
Moreover, the Bernoulli polynomials satisfy the reflection property and 
the multiplication formula (see for example \cite{IrRo:90} p.248):
$$B_m(1-x)=(-1)^m B_m(x)\quad\mbox{and}\quad
B_m(ax)=a^{m-1} \sum_{k=0}^{a-1}B_m\left(x+{k\over a}\right)\quad\mbox{for $m,a>0$.}$$
Hence, since $p-n$ is odd, 
\begin{eqnarray*}
&&B_{p-n}\left(0\right)=B_{p-n}\left({1\over 2}\right)=0\,,\quad B_{p-n}\left({2\over 3}\right)=-B_{p-n}\left({1\over 3}\right)\\
&&B_{p-n}\left({1\over 6}\right)=-B_{p-n}\left({5\over 6}\right)
=\left(1+2^{n-(p-1)}\right)B_{p-n}\left({1\over 3}\right).
\end{eqnarray*}
By these preliminary remarks it is easy to verify that
$$\sum_{k=1}^{p-1}{(-1)^{k-1}\over k^n}\leg{p+k}{3}\equiv
-{2^{n+1}+4\over n6^n}\,B_{p-n}\left({1\over 3}\right) \pmod{p}.$$
On the other hand, if $n$ is odd, by Theorem 1.1, it follows from the above congruence that
\begin{eqnarray*}
\sum_{0<k_1<\cdots<k_{n}<p}\leg{p-k_{n}}{3}
{(-1)^{k_{n}}\over k_1\cdots k_{n}}
&\equiv& {p(n+1)\over 2}
\sum_{0<k_1<\cdots<k_{n+1}<p}\leg{p-k_{n+1}}{3}
{(-1)^{k_{n}}\over k_1\cdots k_{n+1}}\pmod{p^3}\\
&\equiv& -{2^{n+1}+2\over 6^{n+1}}\,pB_{p-n-1}\left({1\over 3}\right) \pmod{p^2}.
\end{eqnarray*}
\end{proof}

%%%%%%%%%%%%%%%%%%%%%%%%%%%%%%%%%%%%%%%%%%%%%%%%%%%%%%%%%%%%%%%%%%%%%%%

\end{document}